\documentclass[a4paper,11pt]{article}
\usepackage{amssymb}
\usepackage{amsfonts}
\usepackage{amsmath}

\setcounter{MaxMatrixCols}{10}

\newtheorem{theorem}{Theorem}[section]

\newtheorem{example}[theorem]{Example}

\newtheorem{proposition}[theorem]{Proposition}

\newenvironment{proof}[1][Proof]{\noindent\textbf{#1.} }{\ \rule{0.5em}{0.5em}}
\setlength{\oddsidemargin}{0.5cm}
\setlength{\evensidemargin}{0.4cm}
\setlength{\textwidth}{15.4cm}
\setlength{\topmargin}{-0.2cm}
\setlength{\headheight}{0.4cm}
\setlength{\headsep}{0.4cm}
\setlength{\textheight}{23.1cm}
\input{tcilatex}

\begin{document}

\title{Rotation number values at a discontinuity}
\author{Ricardo Coutinho\  \\
Grupo de F\'{\i}sica Matem\'{a}tica, Universidade de Lisboa.\\
Departamento de Matem\'{a}tica, Instituto Superior T\'{e}cnico, Universidade
de Lisboa,\\
Av. Rovisco Pais, 1049-001 Lisboa, Portugal.\\
Ricardo.Coutinho@tecnico.ulisboa.pt}
\maketitle

\begin{abstract}
In the space of orientation-preserving circle maps that are not necessarily
surjective nor injective, the rotation number does not vary continuously.
Each map where one of these discontinuities occurs is itself discontinuous
and we can consider the possible values of the rotation number when we
modify this map only at its discontinuities. These values are always
rational numbers that necessarily obey a certain arithmetic relation. In
this paper we show that in several examples this relation totally
characterizes the possible values of the rotation number on its
discontinuities, but we also prove that in certain circumstances this
relation is not sufficient for this characterization.
\end{abstract}

\section{Introduction, notations, examples and result}

We shall consider the space $\mathcal{M}$ of lifts of orientation-preserving
circle maps, that is, the set of functions $f:\mathbb{R\longmapsto R}$ which
satisfy the conditions\ 
\begin{equation*}
y>0\Rightarrow f\left( x+y\right) \geqslant f\left( x\right) \quad \text{%
\textrm{and}}\quad f\left( x+1\right) =f\left( x\right) +1.
\end{equation*}%
It should be noted that $\mathcal{M}$ contains functions which are not
continuous or strictly increasing (not surjective nor injective). For each $%
f\in \mathcal{M}\mathbf{\ }$ the limit (rotation number of $f$)%
\begin{equation*}
\nu \left( f\right) =\lim_{n\rightarrow \infty }\frac{f^{n}\left( x\right) }{%
n}
\end{equation*}%
exists and is independent of $x\in \mathbb{R}$ \cite{RhodesThompsom1}.

In $\mathcal{M}$ the rotation number is an increasing functional, $%
f\leqslant g\ \Rightarrow \ \nu \left( f\right) \leqslant \nu \left(
g\right) $, and we may have $\nu \left( f^{-}\right) <\nu \left(
f^{+}\right) $, where 
\begin{equation*}
f^{-}\left( x\right) =\lim_{\QATOP{\delta \rightarrow 0}{\delta >0}}f\left(
x-\delta \right) \quad \text{\textrm{and}}\quad f^{+}\left( x\right) =\lim_{%
\QATOP{\delta \rightarrow 0}{\delta >0}}f\left( x+\delta \right) .
\end{equation*}%
On this space $\mathcal{M}$ we shall consider the L\'{e}vy distance:\textbf{%
\ }%
\begin{equation*}
d_{H}(f,g)=\inf \{\varepsilon >0\ :\ f(x-\varepsilon )-\varepsilon \leqslant
g(x)\leqslant f(x+\varepsilon )+\varepsilon ,\quad \forall x\in \mathbb{R}\
\}.
\end{equation*}%
Observe that we may have $d_{H}(f,g)=0$ with $f\neq g$. In fact it is easy
to verify that 
\begin{equation*}
d_{H}(f,g)=0\quad \Leftrightarrow \quad f^{+}=g^{+}\quad \Leftrightarrow
\quad f^{-}=g^{-}.
\end{equation*}%
Endowed with this distance $\mathcal{M}$ is a pseudometric space. The
following theorem is known.

\begin{theorem}[ \protect\cite{RhodesThompsom2, Coutinho}]
\label{ThContRotNumb}Let $f_{0}\in\mathcal{M}$. Then for every $%
\varepsilon>0 $ there exists $\delta>0$ such that for any $f\in\mathcal{M}$
satisfying $d_{H}(f,f_{0})<\delta$ we have 
\begin{equation*}
\nu\left( f_{0}^{-}\right) -\varepsilon\leqslant\nu\left( f\right)
\leqslant\nu\left( f_{0}^{+}\right) +\varepsilon.
\end{equation*}
\end{theorem}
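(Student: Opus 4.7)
The strategy is to squeeze $f$ between two explicit lifts built from $f_{0}$ and transfer the sandwich to rotation numbers using the monotonicity of $\nu$ on $\mathcal{M}$. Set
\[
F_\delta(x):=f_0(x+\delta)+\delta,\qquad G_\delta(x):=f_0(x-\delta)-\delta;
\]
both belong to $\mathcal{M}$, and the hypothesis $d_{H}(f,f_{0})<\delta$ is exactly $G_\delta\leq f\leq F_\delta$. Therefore
\[
\nu(G_\delta)\leq \nu(f)\leq \nu(F_\delta),
\]
so it would suffice to show $\nu(F_\delta)\to\nu(f_0^{+})$ and $\nu(G_\delta)\to\nu(f_0^{-})$ as $\delta\to 0^{+}$.

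I would focus on $F_\delta$ (the case of $G_\delta$ is entirely symmetric). First note that $F_\delta$ is non-decreasing in $\delta$, that $F_\delta(x)\downarrow f_0^{+}(x)$ pointwise as $\delta\to 0^{+}$ by the very definition of the right limit, and that one has the ``gap'' $F_\delta(x)\geq f_0^{+}(x)+\delta$. A routine induction on $n$ should then show that $F_\delta^{n}(x)\downarrow(f_0^{+})^{n}(x)$ pointwise and $F_\delta^{n}(x)\geq(f_0^{+})^{n}(x)+\delta$ for every $n\geq 1$ and every $x$: rewriting
\[
F_\delta^{n+1}(x)=f_0\bigl(F_\delta^{n}(x)+\delta\bigr)+\delta,
\]
the inductive hypothesis makes the argument of the outer $f_0$ tend to $(f_0^{+})^{n}(x)$ strictly from the right, so that $f_0\bigl(F_\delta^{n}(x)+\delta\bigr)\to f_0^{+}\bigl((f_0^{+})^{n}(x)\bigr)=(f_0^{+})^{n+1}(x)$. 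Monotonicity of $\nu$ then gives that $\delta\mapsto\nu(F_\delta)$ is non-decreasing and bounded below by $\nu(f_0^{+})$, so $L:=\lim_{\delta\to 0^{+}}\nu(F_\delta)$ exists and $L\geq\nu(f_0^{+})$.

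I would then prove $L=\nu(f_0^{+})$ by contradiction. Suppose $L>\nu(f_0^{+})$ and pick a rational $p/q$ with $\nu(f_0^{+})<p/q<L$. Since $(f_0^{+})^{Nq}(0)/(Nq)\to\nu(f_0^{+})<p/q$ as $N\to\infty$, there exists $N$ with $(f_0^{+})^{Nq}(0)<Np$. By the pointwise convergence of iterates proved above, $F_\delta^{Nq}(0)\to(f_0^{+})^{Nq}(0)$, so $F_\delta^{Nq}(0)\leq Np$ for all sufficiently small $\delta$. Iterating this inequality via $F_\delta(y+1)=F_\delta(y)+1$ gives $F_\delta^{kNq}(0)\leq kNp$ for every $k\geq 1$, hence $\nu(F_\delta)\leq p/q<L$, contradicting $\nu(F_\delta)\geq L$.

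The main obstacle is the inductive propagation of pointwise convergence to all iterates: for discontinuous monotone maps, composition does not in general commute with pointwise limits, and the argument relies crucially on the orbits of $F_\delta$ approaching those of $f_0^{+}$ strictly from the right, so that the identity $\lim_{y\to z^{+}}f_0(y)=f_0^{+}(z)$ can be invoked at each stage. Once the induction is set up, the rest of the proof reduces to the clean Lévy-sandwich of the first step and the elementary remark that $f^{m}(x_{0})\leq x_{0}+\ell$ forces $\nu(f)\leq\ell/m$.
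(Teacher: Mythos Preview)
The paper does not give its own proof of this theorem: it is stated with a citation to \cite{RhodesThompsom2, Coutinho} and used as a known input. So there is no in-paper argument to compare your proposal against.

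On its own merits your argument is correct and essentially complete. The L\'evy sandwich $G_\delta\leq f\leq F_\delta$ plus monotonicity of $\nu$ reduces everything to the one-sided limits $\nu(F_\delta)\to\nu(f_0^{+})$ and $\nu(G_\delta)\to\nu(f_0^{-})$, and your inductive step is the right one: since $F_\delta^{n}(x)+\delta>(f_0^{+})^{n}(x)$ and $F_\delta^{n}(x)+\delta\to(f_0^{+})^{n}(x)$, monotonicity of $f_0$ gives $f_0\bigl(F_\delta^{n}(x)+\delta\bigr)\to f_0^{+}\bigl((f_0^{+})^{n}(x)\bigr)$, and the ``gap'' $F_\delta^{n+1}(x)\geq (f_0^{+})^{n+1}(x)+\delta$ is reproduced because $f_0(y)\geq f_0^{+}(z)$ whenever $y>z$. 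The contradiction step is also fine: from $F_\delta^{Nq}(0)\leq Np$ and $F_\delta\in\mathcal{M}$ one gets $F_\delta^{kNq}(0)\leq kNp$ by the standard telescoping, hence $\nu(F_\delta)\leq p/q$.

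Two cosmetic remarks. First, ``$d_H(f,f_0)<\delta$ is exactly $G_\delta\leq f\leq F_\delta$'' is only the forward implication (there exists $\varepsilon<\delta$ with the $\varepsilon$-inequalities, and monotonicity then yields the $\delta$-inequalities); that forward implication is all you use. Second, in the contradiction step you in fact obtain the strict inequality $F_\delta^{Nq}(0)<Np$, though the non-strict version already suffices.
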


Therefore, the set of discontinuities of the rotation number is%
\begin{equation*}
\mathcal{D}\equiv\left\{ f\in\mathcal{M}:\ \nu\left( f^{-}\right) <\nu\left(
f^{+}\right) \ \right\} .
\end{equation*}

In \cite{Coutinho} it is shown that if $f\in \mathcal{D}$, then there exists 
$m\in \mathbb{Z}^{+}$ such that $f^{m}$ is a step function, that is to say,
the image $f^{m}\left( \left[ 0,1\right] \right) $ is a finite set. As a
direct consequence of this fact we have that if $f\in \mathcal{M}$ is
continuous or strictly increasing, then $f\notin \mathcal{D}$. Observe,
however, that $f\in \mathcal{D}$ itself does not have to be a step function,
see Examples \ref{ExemploSimples2} and \ref{Exemp2Valores}.

On the other hand Theorem \ref{ThContRotNumb} cannot be improved as the
following proposition shows.

\begin{proposition}
Given $f\in \mathcal{D}$, $\delta >0$ and $\nu \in \mathbb{R}$ satisfying $%
\nu \left( f^{-}\right) <\nu <\nu \left( f^{+}\right) $, there is a
homeomorphism $g\in \mathcal{M}$ such that 
\begin{equation*}
d_{H}(g,f)<\delta \quad \text{\textrm{and}}\quad \nu \left( g\right) =\nu .
\end{equation*}
\end{proposition}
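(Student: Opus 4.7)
The plan is to produce two homeomorphisms $g_{-},g_{+}\in\mathcal{M}$, both within L\'evy distance $\delta$ of $f$, satisfying $g_{-}\leq f^{-}$ and $g_{+}\geq f^{+}$ pointwise. Monotonicity of the rotation number then gives
\[
\nu(g_{-})\leq \nu(f^{-})<\nu<\nu(f^{+})\leq \nu(g_{+}).
\]
The convex combinations $g_{t}=(1-t)g_{-}+tg_{+}$, $t\in[0,1]$, are again homeomorphisms in $\mathcal{M}$ (the lift identity, continuity, and strict monotonicity all pass to convex combinations) and satisfy $g_{-}\leq g_{t}\leq g_{+}$ pointwise, so $d_{H}(g_{t},f)<\delta$ for every $t$. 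Since each $g_{t}$ is continuous, $g_{t}^{-}=g_{t}=g_{t}^{+}$, and Theorem \ref{ThContRotNumb} applied at $f_{0}=g_{t}$ shows that $\nu$ is continuous at $g_{t}$; combined with the $d_{H}$-continuity of $t\mapsto g_{t}$, this makes $t\mapsto \nu(g_{t})$ continuous on $[0,1]$. The intermediate value theorem then delivers $t^{\ast}\in(0,1)$ with $\nu(g_{t^{\ast}})=\nu$, and we take $g:=g_{t^{\ast}}$.

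The substantive step is the construction of $g_{-}$ (that of $g_{+}$ being symmetric). Since $d_{H}(f^{-},f)=0$ it suffices to find a homeomorphism $g_{-}\leq f^{-}$ with $d_{H}(g_{-},f^{-})$ arbitrarily small. Fix a small $\eta>0$ and a nonnegative smooth bump $\rho$ supported in $[0,\eta]$ with $\int\rho=1$, and set
\[
h(x)=\int f^{-}(x-y)\rho(y)\,dy.
\]
Because $f^{-}$ is nondecreasing and the averaging window lies in $[x-\eta,x]$, we get $f^{-}(x-\eta)\leq h(x)\leq f^{-}(x)$, so $h\leq f^{-}$ and $d_{H}(h,f^{-})\leq\eta$; the convolution also makes $h$ smooth and preserves the lift identity $h(x+1)=h(x)+1$. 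To turn the merely nondecreasing $h$ into a strictly increasing homeomorphism without losing the inequality $h\leq f^{-}$, set
\[
g_{-}(x)=(1-\mu)h(x)+\mu(x-K),
\]
where $\mu>0$ is small and $K=\sup_{x}(x-h(x))$ (finite because $x\mapsto x-h(x)$ is periodic). By the choice of $K$ one has $x-K\leq h(x)$, hence $g_{-}\leq h\leq f^{-}$; on the other hand $g_{-}$ is smooth, $g_{-}'\geq\mu>0$, and inherits the lift identity, so it is a homeomorphism in $\mathcal{M}$. Finally $|g_{-}-h|\leq\mu\sup_{x}|x-K-h(x)|=\mu M$ for some constant $M$, so $d_{H}(g_{-},f^{-})\leq\eta+\mu M<\delta$ when $\eta$ and $\mu$ are taken small. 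The construction of $g_{+}$ is parallel, with $\rho$ supported in $[-\eta,0]$ (giving $h\geq f^{+}$) and the perturbation $g_{+}(x)=(1-\mu)h(x)+\mu(x+K')$ where $K'=\sup_{x}(h(x)-x)$.

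The main technical point is the simultaneous control of the three conditions on $g_{\pm}$: being a homeomorphism, satisfying the pointwise inequality (which is what forces $\nu(g_{-})\leq\nu(f^{-})$ and $\nu(g_{+})\geq\nu(f^{+})$), and being L\'evy-close to $f$. Each condition is easy on its own, but they pull against one another at the discontinuities of $f^{\pm}$; the two-step procedure above (a one-sided convolution that smooths $f^{\pm}$ while preserving the inequality, followed by a small identity shift compatible with the inequality) is exactly what couples the three.
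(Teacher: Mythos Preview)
Your proof is correct and follows the same route as the paper's: sandwich $f$ between two homeomorphisms $g_{-}\leq f\leq g_{+}$ that are L\'evy-close to $f$, interpolate linearly, and apply the intermediate value theorem via continuity of the rotation number on homeomorphisms. The only difference is that the paper simply asserts the existence of such sandwiching homeomorphisms in one sentence, whereas you supply an explicit construction (one-sided mollification followed by a small affine tilt); this is extra work but not a different idea.
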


\begin{proof}
For each $\delta >0$ there exist homeomorphisms $h_{-\delta }$ and $%
h_{\delta }\in \mathcal{M}$ such that 
\begin{equation*}
h_{-\delta }\leqslant f\leqslant h_{\delta }\quad \text{\textrm{and}}\quad
d_{H}(h_{\pm \delta },f)<\delta .
\end{equation*}%
We can then construct a family of homeomorphisms $g_{\lambda }\in \mathcal{M}
$, with $\lambda \in \left[ 0,1\right] $, by the formula%
\begin{equation*}
g_{\lambda }=\left( 1-\lambda \right) h_{-\delta }+\lambda h_{\delta }.
\end{equation*}%
Therefore $d_{H}(g_{\lambda },f)<\delta $, for all $\lambda \in \left[ 0,1%
\right] $, and $\nu \left( g_{0}\right) =\nu \left( h_{-\delta }\right)
\leqslant \nu \left( f^{-}\right) <\nu \left( f^{+}\right) \leqslant \nu
\left( h_{\delta }\right) =\nu \left( g_{1}\right) $. Since the rotation
number is continuous in the subspace of the homeomorphisms of $\mathcal{M}$,
we have that, if $\nu \left( f^{-}\right) <\nu <\nu \left( f^{+}\right) $,
then there exists $\lambda _{0}\in \left[ 0,1\right] \ $such that $\nu
\left( g_{\lambda _{0}}\right) =\nu $.
\end{proof}

Although in an arbitrary neighborhood of $f\in \mathcal{D}$ we have an
interval of possible values of the rotation number, the same is not true if
we consider only the functions that are at a null distance from $f$. In \cite%
{Coutinho} is given a characterization of the possible rotation numbers for
functions in these circumstances:

\begin{theorem}[\protect\cite{Coutinho}]
\label{TeorNumRotDesc}Let $f_{0},f_{1}\in\mathcal{D}$ be such that $%
d_{H}(f_{0},f_{1})=0$ and $\nu\left( f_{0}\right) <\nu\left( f_{1}\right) $,
then $\nu\left( f_{0}\right) =\frac{p_{0}}{q_{0}}$ and $\nu\left(
f_{1}\right) =\frac{p_{1}}{q_{1}}$ are rationals that, when represented as
irreducible fractions, satisfy the condition 
\begin{equation*}
\frac{p_{1}-1}{q_{1}}\leqslant\frac{p_{0}}{q_{0}}<\dfrac{p_{1}}{q_{1}}%
\leqslant\frac{p_{0}+1}{q_{0}}.
\end{equation*}
\end{theorem}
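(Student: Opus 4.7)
My plan has two phases, corresponding to the two claims implicit in the statement: rationality of $\nu(f_{0})$ and $\nu(f_{1})$, and then the arithmetic inequality.

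Phase one (rationality). I would invoke the result recalled just above the statement: if $f\in\mathcal{D}$, then some iterate $f^{m}$ is a step function, with image $\{v_{1},\dots,v_{k}\}$ mod $1$. It then suffices to observe that such a step function $g\in\mathcal{M}$ has rational rotation number, because after one step the orbit of any point lies in the discrete $\mathbb{Z}$-invariant set $V=\bigcup_{j}(v_{j}+\mathbb{Z})$, and since $g|_{V}$ is monotone with only $k$ residues per unit interval, the sequence of residues mod $1$ is eventually periodic, giving $\nu(g)\in\mathbb{Q}$. Hence $\nu(f_{i})=\nu(f_{i}^{m_{i}})/m_{i}\in\mathbb{Q}$.

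Phase two (arithmetic inequality). Write $\nu(f_{i})=p_{i}/q_{i}$ in lowest terms. The key structural input is that $d_{H}(f_{0},f_{1})=0$ forces $f_{0}$ and $f_{1}$ to coincide off the common set $D$ of discontinuities where $f^{-}<f^{+}$, which is finite modulo $1$ by the step-function structure of the iterates. I would first use the finite combinatorics inherited from $f_{i}^{m_{i}}$ to produce a genuine periodic orbit, namely a point $x_{i}$ with $f_{i}^{q_{i}}(x_{i})=x_{i}+p_{i}$. Then I would observe that this orbit must visit $D$: otherwise $f_{0}$ and $f_{1}$ would agree along the whole orbit, making $x_{i}$ periodic for the other map and forcing $\nu(f_{0})=\nu(f_{1})$, contradicting the hypothesis.

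The core of the argument is to extract the sharp bound $1\leq q_{0}p_{1}-q_{1}p_{0}\leq\min(q_{0},q_{1})$ from the way the two periodic orbits interleave on $\mathbb{R}$. The lower bound is immediate from $p_{0}/q_{0}<p_{1}/q_{1}$. For the upper bound I would compare $f_{0}^{q_{0}q_{1}}$ and $f_{1}^{q_{0}q_{1}}$ starting at a common point lying on both periodic orbits (or on their forward concatenation): the two total displacements are $p_{0}q_{1}$ and $p_{1}q_{0}$, and their difference $q_{0}p_{1}-q_{1}p_{0}$ must be absorbed by the ``choices'' the two maps make at discontinuities in $D$. A counting argument relying on the cyclic order preserved by monotone maps, together with the fact that a periodic orbit of rotation $p/q$ in lowest terms meets each residue class on the circle exactly once per period, should pin the bound down to $\min(q_{0},q_{1})$. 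I anticipate the main obstacle to be precisely this sharpness: producing some bound on $q_{0}p_{1}-q_{1}p_{0}$ is easy (a crude estimate gives $q_{0}q_{1}$), but getting exactly $\min(q_{0},q_{1})$ will require a delicate cyclic-order matching between the $q_{0}$ points of one orbit and the $q_{1}$ points of the other, using that divergences through $D$ cannot accumulate more than once per orbit position.
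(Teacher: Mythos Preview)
The paper does not prove this theorem. Theorem~\ref{TeorNumRotDesc} is quoted from the companion reference and is used here only as input (for Proposition~\ref{PropoS} and, inside Section~\ref{SecDem}, for one step of Proposition~\ref{Prop3}); no argument for it appears anywhere in the present text. So there is nothing in this paper to compare your proposal against.

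On the substance of your sketch, two remarks. First, your assertion that the discontinuity set $D$ is finite modulo~$1$ ``by the step-function structure of the iterates'' is not justified: the paper explicitly warns that $f\in\mathcal{D}$ need not itself be a step function (Examples~\ref{ExemploSimples2} and~\ref{Exemp2Valores}), and the fact that some iterate $f^{m}$ is a step function says nothing direct about the cardinality of the discontinuities of $f$. What the paper does record from the cited reference, and what would more cleanly replace your ``the periodic orbit must hit $D$'' step, is the stronger statement that any periodic orbit of $f_{0}$ intersects every periodic orbit of $f_{1}$; that gives a common base point for the two dynamics without any finiteness of $D$. Second, you correctly identify the crux as the sharp bound $q_{0}p_{1}-q_{1}p_{0}\leqslant\min(q_{0},q_{1})$, and you are candid that your counting heuristic is not yet a proof. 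As written, Phase two is a plan with the hard step still open; nothing in the present paper will help you close it, since the argument lives entirely in the cited reference.
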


In particular, if we know the values of $\nu \left( f^{-}\right) $ and $\nu
\left( f^{+}\right) $ we have only a finite set of possible values for $\nu
\left( f\right) $. In the next proposition we give a (non-injective)
parameterization of this set where we use the floor and the ceiling integer
functions respectively defined by 
\begin{equation*}
\left\lfloor x\right\rfloor =\max \left\{ n\in \mathbb{Z}:\mathbb{\quad }%
n\leqslant x\right\} \quad \text{\textrm{and}}\quad \left\lceil
x\right\rceil =\min \left\{ n\in \mathbb{Z}:\mathbb{\quad }x\leqslant
n\right\} .
\end{equation*}

\begin{proposition}
\label{PropoS}If $f\in\mathcal{D}$ is such that $\nu^{-}\equiv$ $\nu\left(
f^{-}\right) <\nu\left( f\right) <\nu\left( f^{+}\right) \equiv\nu^{+}$,
then $\nu\left( f\right) $ belongs to the following finite set 
\begin{equation*}
\nu\left( f\right) \in S_{\nu^{-},\nu^{+}}\equiv\left\{ \frac{\left\lfloor
q\,\nu^{-}\right\rfloor +1}{q}:\quad\left\lceil q\,\nu^{+}\right\rceil
=\left\lfloor q\,\nu^{-}\right\rfloor +2,\quad q\in\mathbb{Z}^{+}\right\} .
\end{equation*}
\end{proposition}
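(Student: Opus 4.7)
The plan is to apply Theorem \ref{TeorNumRotDesc} twice, once to the pair $(f^-,f)$ and once to the pair $(f,f^+)$, and then read off the floor/ceiling conditions from the resulting inequalities. The starting observation is that $f^-$ and $f^+$ both belong to $\mathcal{M}$ (they are monotone and 1-periodically equivariant by construction), and that for the monotone $f$ one has the identities $(f^-)^+=f^+=(f^+)^+$, so by the characterization $d_H(\cdot,\cdot)=0\iff (\cdot)^+=(\cdot)^+$ given in the introduction, $d_H(f^-,f)=d_H(f,f^+)=0$. Moreover $(f^-)^-=f^-$ and $(f^+)^-=f^-$, hence $\nu((f^-)^-)=\nu^-<\nu^+=\nu((f^-)^+)$ and analogously for $f^+$, so $f^-,f^+\in\mathcal{D}$ and Theorem \ref{TeorNumRotDesc} is indeed applicable to each pair under the hypothesis $\nu^-<\nu(f)<\nu^+$.

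Write $\nu(f)=p/q$ in irreducible form. Applied to $(f_0,f_1)=(f^-,f)$, Theorem \ref{TeorNumRotDesc} gives in particular
\begin{equation*}
\frac{p-1}{q}\leqslant \nu^{-}<\frac{p}{q},
\end{equation*}
and multiplying by $q>0$ yields $p-1\leqslant q\nu^{-}<p$, so $\lfloor q\nu^{-}\rfloor=p-1$, i.e.\ $p=\lfloor q\nu^{-}\rfloor+1$. Applied to $(f_0,f_1)=(f,f^+)$, the theorem gives
\begin{equation*}
\frac{p}{q}<\nu^{+}\leqslant \frac{p+1}{q},
\end{equation*}
whence $p<q\nu^{+}\leqslant p+1$ and $\lceil q\nu^{+}\rceil=p+1=\lfloor q\nu^{-}\rfloor+2$. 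Combining the two identities shows that $\nu(f)=(\lfloor q\nu^{-}\rfloor+1)/q$ with the required arithmetic constraint on $q$, that is, $\nu(f)\in S_{\nu^-,\nu^+}$.

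For the finiteness of $S_{\nu^-,\nu^+}$, the constraint $\lceil q\nu^{+}\rceil-\lfloor q\nu^{-}\rfloor=2$ forces $q(\nu^{+}-\nu^{-})\leqslant \lceil q\nu^{+}\rceil-\lfloor q\nu^{-}\rfloor=2$, so $q\leqslant 2/(\nu^{+}-\nu^{-})$ and only finitely many denominators contribute. The whole argument is essentially bookkeeping on top of Theorem \ref{TeorNumRotDesc}; the only step that requires any care is the verification that $f^\pm$ have the same left/right limit functions as $f$, which is where the monotonicity of elements of $\mathcal{M}$ is used and is the only place a slip would be easy to make.
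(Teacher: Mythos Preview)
Your proof is correct and follows essentially the same route as the paper's: apply Theorem~\ref{TeorNumRotDesc} to the two pairs $(f^{-},f)$ and $(f,f^{+})$, extract the chain $p-1\leqslant q\nu^{-}<p<q\nu^{+}\leqslant p+1$, and read off $p=\lfloor q\nu^{-}\rfloor+1=\lceil q\nu^{+}\rceil-1$. You are actually more careful than the paper in explicitly checking that $f^{\pm}\in\mathcal{D}$ and $d_H(f^{\pm},f)=0$ before invoking the theorem, and your finiteness bound $q\leqslant 2/(\nu^{+}-\nu^{-})$ is the clean version of the paper's parenthetical estimate $q\leqslant 2q_{-}q_{+}/\Delta$.
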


\begin{proof}
If $\nu\left( f\right) =\frac{p}{q}$ (irreducible fraction), we know from
Theorem \ref{TeorNumRotDesc} (applied to pairs $\left( f^{-},f\right) $ and $%
\left( f,f^{+}\right) $) that we have, with $\frac{p_{-}}{q_{-}}=\nu^{-}$
and $\frac{p_{+}}{q_{+}}=\nu^{+}$, 
\begin{equation*}
p-1\leqslant q\dfrac{p_{-}}{q_{-}}<p<q\dfrac{p_{+}}{q_{+}}\leqslant p+1.
\end{equation*}
Equivalently $p=\left\lfloor q\,\nu^{-}\right\rfloor +1=\left\lceil q\,\nu
^{+}\right\rceil -1$ and the condition $\left\lceil q\,\nu^{+}\right\rceil
=\left\lfloor q\,\nu^{-}\right\rfloor +2$ can only be true for a finite
number of values of $q\in\mathbb{Z}^{+}$ since $\,\nu^{-}<\nu^{+}$ (in fact
we can even show that $\left( q_{-}+q_{+}\right) /\Delta\leqslant q\leqslant
2q_{-}q_{+}/\Delta$, where $\Delta=p_{+}q_{-}-p_{-}q_{+})$.
\end{proof}

The purpose of this article is to evaluate the extent to which Theorem \ref%
{TeorNumRotDesc} is insightful in describing the set of rotation number
values at a discontinuity $f\in \mathcal{D}$ that we can define symbolically
by%
\begin{equation*}
V\left( f\right) \equiv \left\{ \nu \left( g\right) :\ g\in \mathcal{M}\ 
\text{\textrm{and}}\ d_{H}(f,g)=0\right\} .
\end{equation*}%
With this notation Theorem \ref{TeorNumRotDesc} states%
\begin{equation}
V\left( f\right) \subset \left\{ \nu ^{-},\nu ^{+}\right\} \cup S_{\nu
^{-},\nu ^{+}},  \label{RelInclusV(f)<S}
\end{equation}%
with $\nu ^{-}=$ $\nu \left( f^{-}\right) $, $\nu ^{+}=\nu \left(
f^{+}\right) $ and $S_{\nu ^{-},\nu ^{+}}$ defined in Proposition \ref%
{PropoS}. The question that arises is whether we can replace the inclusion
by an equality. We will see in the following examples that the answer may be
affirmative, but it may be negative as well. Let us start by looking at an
example where $V\left( f\right) =\left\{ \nu ^{-},\nu ^{+}\right\} \cup
S_{\nu ^{-},\nu ^{+}}$.

\begin{example}
\label{ExemploSimples}Let $f=\frac{\left\lceil 2x\right\rceil }{2}$, $g_{1}=%
\frac{1+\left\lceil 2x\right\rceil +\left\lfloor 2x\right\rfloor }{4}$and $%
g_{2}=\frac{1+\left\lceil x\right\rceil +\left\lfloor 2x\right\rfloor
+\left\lfloor x+\frac{1}{2}\right\rfloor }{4}$ (see Figure \ref{Fig1}). 
\FRAME{ftbhFU}{15.328cm}{3.8836cm}{0pt}{\Qcb{Graph of $f$, $g_{1}$, $g_{2}$
and $f^{+}$ from Example \protect\ref{ExemploSimples}. Their rotation
numbers are $0$, $1/4$, $1/3$ and $1/2$, respectively.}}{\Qlb{Fig1}}{%
grafseclin10.gif}{\special{language "Scientific Word";type
"GRAPHIC";maintain-aspect-ratio TRUE;display "USEDEF";valid_file "F";width
15.328cm;height 3.8836cm;depth 0pt;original-width 37.417in;original-height
9.3538in;cropleft "0";croptop "1";cropright "1.0001";cropbottom "0";filename
'Figuras/GrafSecLin10.gif';file-properties "XNPEU";}} We have $f^{+}=\frac{%
1+\left\lfloor 2x\right\rfloor }{2}$, $f^{-}=g_{1}^{-}=g_{2}^{-}=f\ $and$\
g_{1}^{+}=g_{2}^{+}=f^{+}$. Since $f\left( 0\right) =0$, $g_{1}^{4}\left(
0\right) =1$, $g_{2}^{3}\left( 0\right) =1$, $\left. f^{+}\right. ^{2}\left(
0\right) =1$, we obtain $\nu \left( f\right) =0$, $\nu \left( g_{1}\right) =%
\frac{1}{4}$, $\nu \left( g_{2}\right) =\frac{1}{3}$ and $\nu \left(
f^{+}\right) =\frac{1}{2}$. On the other hand $S_{0,\frac{1}{2}}=\left\{ 
\frac{1}{4},\frac{1}{3}\right\} $, therefore $V\left( f\right) =\left\{ 0,%
\frac{1}{2}\right\} \cup S_{0,\frac{1}{2}}$.
\end{example}

The next example shows that we can also have $V\left( f\right) \neq\left\{
\nu^{-},\nu^{+}\right\} \cup S_{\nu^{-},\nu^{+}}$.

\begin{example}
\label{ExemploSimples2}Let $f\left( x\right) =\min \left( x+\frac{1}{2}%
,\left\lceil x\right\rceil \right) $ (see Figure \ref{Fig2}). \FRAME{ftbhFU}{%
11.5125cm}{3.8858cm}{0pt}{\Qcb{Graph of functions $f\left( x\right) $, $g=%
\frac{1}{2}\left( f+f^{+}\right) $ and $f^{+}$ from Example \protect\ref%
{ExemploSimples2}. Their rotation numbers are $0,1/3$ and $1/2$,
respectively.}}{\Qlb{Fig2}}{grafseclin20.gif}{\special{language "Scientific
Word";type "GRAPHIC";maintain-aspect-ratio TRUE;display "USEDEF";valid_file
"F";width 11.5125cm;height 3.8858cm;depth 0pt;original-width
28.0623in;original-height 9.3538in;cropleft "0";croptop "1";cropright
"1";cropbottom "0";filename 'Figuras/GrafSecLin20.gif';file-properties
"XNPEU";}} We have $f^{-}=$ $f$, $f\left( 0\right) =0$, $\left. f^{+}\right.
^{2}\left( 0\right) =f^{+}\left( \frac{1}{2}\right) =1$, therefore $\nu
\left( f^{-}\right) =0$ and $\nu \left( f^{+}\right) =\frac{1}{2}$. But if $%
g\in \mathcal{M}$ is such that $d_{H}(f,g)=0$ and $f^{-}\neq g\neq f^{+}$,
then $0<g\left( 0\right) <\frac{1}{2}$ and $g^{3}\left( 0\right) =$ $g\left(
g\left( 0\right) +\frac{1}{2}\right) =1$; so that $\nu \left( g\right) =%
\frac{1}{3}$. Hence $V\left( f\right) =\left\{ 0,\frac{1}{3},\frac{1}{2}%
\right\} \neq \left\{ 0,\frac{1}{2}\right\} \cup S_{0,\frac{1}{2}}=\left\{ 0,%
\frac{1}{4},\frac{1}{3},\frac{1}{2}\right\} $.
\end{example}

The preceding examples are very particular (for being simple) and suggest
several conjectures that are not true; so it is convenient to give two less
trivial examples.

\begin{example}
\label{ExemploComplicado}Let$f\left( x\right) $ be defined by the expression 
\begin{equation*}
\frac{1}{10}\left( 4+2\left\lceil x\right\rceil +\left\lceil x-\tfrac{1}{10}%
\right\rceil +\left\lceil x-\tfrac{1}{5}\right\rceil +2\left\lceil x-\tfrac{2%
}{5}\right\rceil +\left\lceil x-\tfrac{1}{2}\right\rceil +\left\lceil x-%
\tfrac{3}{5}\right\rceil +2\left\lceil x-\tfrac{4}{5}\right\rceil \right)
\end{equation*}%
and $f_{1}$, $f_{2}$, $f_{3}$ and $f_{4}$ according to Figure \ref{Fig3}. 
\FRAME{ftbhFU}{15.4269cm}{4.4613cm}{0pt}{\Qcb{Graph of the functions $f_{1}$%
, $f_{2}$, $f_{3}$ and $f_{4}$ from Example \protect\ref{ExemploComplicado}.
Their rotation numbers are $2/7$, $3/10$, $1/3$ and $3/8$, respectively.}}{%
\Qlb{Fig3}}{grafseclin30.gif}{\special{language "Scientific Word";type
"GRAPHIC";maintain-aspect-ratio TRUE;display "USEDEF";valid_file "F";width
15.4269cm;height 4.4613cm;depth 0pt;original-width 37.5in;original-height
10.7185in;cropleft "0";croptop "1";cropright "1";cropbottom "0";filename
'Figuras/GrafSecLin30.gif';file-properties "XNPEU";}} Then ( $s=1,2,3,4$)

\begin{equation*}
f^{-}=f_{s}^{-}=f\quad \mathrm{and}\quad f_{s}^{+}=f^{+}.
\end{equation*}%
By calculating the successive iterates of the point $0$ by each of these
functions we obtain 
\begin{equation*}
\nu \left( f\right) =\tfrac{1}{4}\ ,\quad \nu \left( f_{1}\right) =\tfrac{2}{%
7}\ ,\quad \nu \left( f_{2}\right) =\tfrac{3}{10}\ ,\quad \nu \left(
f_{3}\right) =\tfrac{1}{3}\ ,\quad \nu \left( f_{4}\right) =\tfrac{3}{8}\
,\quad \nu \left( f^{+}\right) =\tfrac{2}{5}.
\end{equation*}%
On the other hand $S_{\frac{1}{4},\frac{2}{5}}=\left\{ \frac{2}{7},\frac{3}{%
10},\frac{1}{3},\frac{3}{8}\right\} $, so in this case $V\left( f\right)
=\left\{ \frac{1}{4},\frac{2}{5}\right\} \cup S_{\frac{1}{4},\frac{2}{5}}$.
\end{example}

The next example also shows that, in general, 
\begin{equation*}
V\left( f\right) \neq \left\{ \nu \left( \left( 1-\lambda \right)
f^{-}+\lambda f^{+}\right) :\lambda \in \left[ 0,1\right] \right\} .
\end{equation*}

\begin{example}
\label{Exemp2Valores}Given $\left( \alpha ,\beta \right) \in \left[ 0,1%
\right] ^{2},$ let $f_{\alpha ,\beta }\in \mathcal{M}$ be defined on $\left[
0,1\right) $ by 
\begin{equation*}
f_{\alpha ,\beta }\left( x\right) =\left\{ 
\begin{array}{ccl}
\left( 1+2\alpha \right) /6 & \mathrm{if} & x=0 \\ 
{\small 1/2} & \mathrm{if} & 0<x<1/3 \\ 
\left( 1+\beta \right) /2 & \mathrm{if} & x=1/3 \\ 
{\small 1} & \mathrm{if} & 1/3<x\leqslant 5/6 \\ 
x+1/6 & \mathrm{if} & 5/6\leqslant x<1%
\end{array}%
\right.
\end{equation*}%
and by $f_{\alpha ,\beta }\left( x\right) =f_{\alpha ,\beta }\left(
x-\left\lfloor x\right\rfloor \right) +\left\lfloor x\right\rfloor $ on the
remaining points (see Figure \ref{Fig4}). \FRAME{ftbhFU}{11.5279cm}{4.4372cm%
}{0pt}{\Qcb{Graph of the function $f_{\protect\alpha ,\protect\beta }\left(
x\right) $ with $\protect\alpha \in \left\{ \frac{1}{4},\frac{1}{2},\frac{3}{%
4}\right\} $ and $\protect\beta =\frac{5}{6}$. Their rotation numbers are $%
1/3$, $2/5$ and $1/2$, respectively.}}{\Qlb{Fig4}}{grafseclin40.gif}{\special%
{language "Scientific Word";type "GRAPHIC";maintain-aspect-ratio
TRUE;display "USEDEF";valid_file "F";width 11.5279cm;height 4.4372cm;depth
0pt;original-width 28.1254in;original-height 10.7185in;cropleft "0";croptop
"1";cropright "1";cropbottom "0";filename
'Figuras/GrafSecLin40.gif';file-properties "XNPEU";}} By calculating the
successive iterates of the point $0$ we obtain 
\begin{equation*}
\nu \left( f_{\alpha ,\beta }\right) =\left\{ 
\begin{array}{ccl}
{\small 1/3} & \mathrm{if} & \alpha <{\small 1/2}\ \mathrm{or}\ (\alpha =%
{\small 1/2}\ \mathrm{and}\ \beta \leqslant {\small 2/3}) \\ 
{\small 2/5} & \mathrm{if} & \alpha ={\small 1/2}\ \mathrm{and}\ {\small 2/3}%
<\beta <1 \\ 
{\small 1/2} & \mathrm{if} & (\alpha ={\small 1/2}\ \mathrm{and}\ \beta =1)\ 
\mathrm{or}\ {\small 1/2}<\alpha%
\end{array}%
\right. .
\end{equation*}

Since $d_{H}(f,f_{0,0})=0$ if and only if $f=f_{\alpha,\beta}$ for some $%
\left( \alpha,\beta\right) \in\left[ 0,1\right] ^{2}$, and in this case $%
f^{-}=f_{0,0}$ and $f^{+}=f_{1,1}$; we deduce $V\left( f_{\alpha,\beta
}\right) =\left\{ \frac{1}{3},\frac{2}{5},\frac{1}{2}\right\} $, however 
\begin{equation*}
S_{\frac{1}{3},\frac{1}{2}}=\left\{ \tfrac{3}{8},\tfrac{2}{5},\tfrac{5}{12},%
\tfrac{3}{7},\tfrac{4}{9}\right\} .
\end{equation*}

Note also that for $\lambda \in \left[ 0,1\right] $ (and $f=f_{\alpha ,\beta
}$) one has $\left( 1-\lambda \right) f^{-}+\lambda f^{+}=f_{\lambda
,\lambda }$ so that 
\begin{equation*}
\left\{ \nu \left( \left( 1-\lambda \right) f^{-}+\lambda f^{+}\right)
:\lambda \in \left[ 0,1\right] \right\} =\left\{ \tfrac{1}{3},\tfrac{1}{2}%
\right\} \neq V\left( f\right) .
\end{equation*}
\end{example}

Another question that arises is whether in inclusion (\ref{RelInclusV(f)<S})
we can replace $S_{\nu ^{-},\nu ^{+}}$ by a smaller set. By defining 
\begin{equation*}
V_{\nu ^{-},\nu ^{+}}\equiv \left\{ \nu \left( f\right) :\ f\in \mathcal{M}%
\quad \text{\textrm{and}}\quad \nu ^{-}=\text{ }\nu \left( f^{-}\right) <\nu
\left( f\right) <\nu \left( f^{+}\right) =\nu ^{+}\right\} ,
\end{equation*}%
we obviously have $V\left( f\right) \subset \left\{ \nu ^{-},\nu
^{+}\right\} \cup V_{\nu ^{-},\nu ^{+}}$ if $\nu ^{-}=$ $\nu \left(
f^{-}\right) $ and $\nu ^{+}=\nu \left( f^{+}\right) $; and therefore the
problem is whether $V_{\nu ^{-},\nu ^{+}}=S_{\nu ^{-},\nu ^{+}}$? This
equality is verified in certain cases as shown in Examples \ref%
{ExemploSimples} and \ref{ExemploComplicado}, but surprisingly it is not
true in general as shown by the following theorem which we shall prove in
Section \ref{SecDem}.

\begin{theorem}
\label{TeorPrincipal}If $\frac{p}{q}\in S_{\nu^{-},\nu^{+}}$ is such that $%
\frac{p}{q}$ is irreducible, $\frac{p-1}{q}=\nu^{-}$, $\frac{p+1}{q}=\nu^{+}$
and $q$ is odd, then there is no $f\in\mathcal{M}$ with $\nu\left(
f^{-}\right) =\nu^{-}$, $\nu\left( f^{+}\right) =\nu^{+}$ and $\nu\left(
f\right) =\frac{p}{q}$.
\end{theorem}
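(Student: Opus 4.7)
The plan is to argue by contradiction: suppose such an $f$ exists. Since $\nu(f^-)<\nu(f^+)$ we have $f\in\mathcal{D}$, so by the result recalled after Theorem~\ref{ThContRotNumb}, $f^{m}$ is a step function for some $m\in\mathbb{Z}^{+}$; hence $f$ has only finitely many discontinuities modulo $1$ and $f^{-}$, $f$, $f^{+}$ agree off this finite set. A direct check gives $(f^{\pm})^{-}=f^{-}$ and $(f^{\pm})^{+}=f^{+}$, so $f^{-}$ and $f^{+}$ themselves belong to $\mathcal{D}$, satisfy $d_{H}(f^{-},f^{+})=0$, and $\nu(f^{-})<\nu(f^{+})$.

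Applying Theorem~\ref{TeorNumRotDesc} to the pair $(f^{-},f^{+})$ and writing $(p-1)/q$, $(p+1)/q$ in lowest terms, a short calculation reduces the two inequalities of that theorem to $\gcd(p-1,q)\geq 2$ and $\gcd(p+1,q)\geq 2$. Since $q$ is odd these gcd's are odd and hence $\geq 3$; outside this range Theorem~\ref{TeorNumRotDesc} already forbids the configuration and there is nothing to prove. Setting $q_{0}=q/\gcd(p-1,q)$ and $q_{1}=q/\gcd(p+1,q)$, the one-sided continuous lifts $f^{-}$ and $f^{+}$ admit periodic orbits on the circle of primitive periods $q_{0}$ and $q_{1}$; the irreducibility of $p/q$ together with the step-function structure of $f^{m}$ forces $f$ to admit a periodic orbit $c_{0},c_{1},\ldots,c_{q-1}$ of primitive period $q$ with $c_{q}=c_{0}+p$.

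A crucial observation is that each of the three periodic orbits must meet the discontinuity set of $f$: if, say, no point of the $f^{+}$-orbit were a discontinuity, then $f^{-}$ would act on that orbit identically to $f^{+}$ and force $\nu(f^{-})=(p+1)/q$, contradicting $\nu(f^{-})=(p-1)/q$. The arithmetic identities $q(\nu(f^{+})-\nu(f^{-}))=2$ and $q(\nu(f)-\nu(f^{-}))=q(\nu(f^{+})-\nu(f))=1$, together with the symmetric relation $\nu(f)=\tfrac12(\nu(f^{-})+\nu(f^{+}))$, are the arithmetic expression of the structure to be contradicted. My proposal is that these identities, combined with the cyclic ordering of the three orbits on the circle and the finitary bookkeeping at the discontinuities, induce a $\mathbb{Z}/2$ symmetry on the cyclic $q$-set of $f$-orbit points that pairs the ``lower half'' of the rotation gain (from $f^{-}$ to $f$) with the ``upper half'' (from $f$ to $f^{+}$). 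For $q$ odd no fixed-point-free involution of $\mathbb{Z}/q$ can exist; the fixed point such an involution must have corresponds to an index $i$ at which $f^{-}(c_{i})=f(c_{i})=f^{+}(c_{i})$, contradicting the fact that the orbit must pass through genuine discontinuities of $f$.

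The main obstacle is making the involution in the previous paragraph precise: this requires a Sturmian-style description of how the $q$-orbit of $f$ threads between the smaller $f^{\pm}$-orbits on the circle, together with careful finitary bookkeeping enabled by the step-function structure of $f^{m}$. The lower bounds $\gcd(p\pm 1,q)\geq 3$ extracted from Theorem~\ref{TeorNumRotDesc} should be exactly what makes the combinatorics rich enough to force the pairing, while the oddness of $q$ is precisely what prevents such a pairing from being realised; turning this intuition into a rigorous argument is where all the real work of the proof lies.
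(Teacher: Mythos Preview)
Your proposal has a genuine gap: you never construct the involution, and you explicitly concede that ``turning this intuition into a rigorous argument is where all the real work of the proof lies.'' Nothing in what precedes that sentence singles out a canonical pairing of the $q$ orbit points, and the heuristic that the ``lower half'' of the rotation gain should match the ``upper half'' does not, by itself, produce a map $\mathbb{Z}/q\to\mathbb{Z}/q$; so the parity obstruction you invoke is not grounded in anything you have actually proved. Your preliminary observation that Theorem~\ref{TeorNumRotDesc} applied to $(f^{-},f^{+})$ yields $\gcd(p\pm 1,q)\geq 2$ (hence $\geq 3$ for odd $q$) is correct and pleasant, but it is never used afterwards.

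The paper's argument is entirely different and avoids any involution. It first proves two orbit-inclusion lemmas: under the hypotheses $\nu(f^{-})=(p-1)/q$ and $\nu(f^{+})=(p+1)/q$, every periodic orbit of $f^{-}$ and of $f^{+}$ is \emph{contained} in a periodic $f$-orbit $\{x_{j}\}_{j\in\mathbb{Z}}$, and in fact $(f^{-})^{k}(x_{0})=x_{k(p-1)}$ and $(f^{+})^{k}(x_{0})=x_{k(p+1)}$ for all $k$. From this one shows that $p$ must be odd: if $p$ were even then $p-1$ and $p+1$ would be coprime of the same sign, so a B\'ezout relation $k_{0}(p-1)-k_{1}(p+1)=1$ with $k_{0},k_{1}\in\mathbb{N}$ would exist; combining $x_{k_{1}(p+1)}<x_{k_{0}(p-1)}$ with the monotonicity fact $x<y\Rightarrow f^{+}(x)\leq f^{-}(y)$ then contradicts the ordering of the $x_{j}$. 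Finally, applying this same conclusion to $f+1$ (rotation number $(p+q)/q$) shows that $p+q$ is odd as well, whence $q$ is even. None of these steps---the orbit-inclusion lemmas, the B\'ezout/coprimality contradiction, or the passage to $f+1$---appear in your proposal.
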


To be sure that the previous theorem is relevant we need an example of a
function $f\in \mathcal{D}$ such that for a certain irreducible fraction $%
\frac{p}{q}$ with odd $q$, we have $\frac{p-1}{q}=\nu \left( f^{-}\right) $
and $\frac{p+1}{q}=\nu \left( f^{+}\right) $; which we shall see in the next
example. In fact it would have been enough to give examples of irreducible
fractions $\frac{p_{-}}{q_{-}},\frac{p}{q},\frac{p_{+}}{q_{+}}$ such that 
\begin{equation*}
\frac{p_{+}-1}{q_{+}}\leqslant \frac{p_{-}}{q_{-}}=\frac{p-1}{q}\quad \text{%
\textrm{and}}\quad \frac{p+1}{q}=\frac{p_{+}}{q_{+}}\leqslant \frac{p_{-}+1}{%
q_{-}},
\end{equation*}%
since in \cite{Coutinho} it is shown that if $\frac{p_{+}-1}{q_{+}}\leqslant 
\frac{p_{-}}{q_{-}}<\frac{p_{+}}{q_{+}}\leqslant \frac{p_{-}+1}{q_{-}}$,
then there exists $f\in \mathcal{M}$ with $\nu \left( f^{-}\right) =\tfrac{%
p_{-}}{q_{-}}$ and $\nu \left( f^{+}\right) =\tfrac{p_{+}}{q_{+}}$.

\begin{example}
\label{ExemploFinal}For $f\in \mathcal{D}$ defined by the following
expression (see Figure \ref{Fig5}), \FRAME{ftbhFU}{11.5279cm}{4.4372cm}{0pt}{%
\Qcb{Graph of the functions $f$, $f\left( x\right) +\frac{1}{10}\left(
\left\lfloor 1+x-\frac{1}{5}\right\rfloor -\left\lceil x-\frac{1}{5}%
\right\rceil \right) $ and $f^{+}$ from Example \protect\ref{ExemploFinal}.
Their rotation numbers are $1/5$, $1/4$ and $1/3$, respectively.}}{\Qlb{Fig5}%
}{grafseclin50.gif}{\special{language "Scientific Word";type
"GRAPHIC";maintain-aspect-ratio TRUE;display "USEDEF";valid_file "F";width
11.5279cm;height 4.4372cm;depth 0pt;original-width 28.1254in;original-height
10.7185in;cropleft "0";croptop "1";cropright "1";cropbottom "0";filename
'Figuras/GrafSecLin50.gif';file-properties "XNPEU";}} 
\begin{equation*}
f\left( x\right) =\tfrac{1}{5}\left( 1+\left\lceil 5x\right\rceil
-\left\lceil x\right\rceil +\left\lceil x-\tfrac{1}{10}\right\rceil \right)
\end{equation*}%
it is easy to verify that $\nu \left( f^{-}\right) =\frac{1}{5}$ and $\nu
\left( f^{+}\right) =\frac{1}{3}$. Also $\frac{4-1}{15}=\frac{1}{5}$ and $%
\frac{4+1}{15}=\frac{1}{3}$; so $\frac{4}{15}\in S_{\frac{1}{5},\frac{1}{3}}$
and Theorem \ref{TeorPrincipal} shows that $\frac{4}{15}\notin V_{\frac{1}{5}%
,\frac{1}{3}}$. Therefore $V_{\frac{1}{5},\frac{1}{3}}\neq S_{\frac{1}{5},%
\frac{1}{3}}$.

Although it is easy to see that, for example, $\tfrac{3}{11}\notin V\left(
f\right) $, by constructing other examples $g\in\mathcal{M}$ with $\nu\left(
g^{-}\right) =\frac{1}{5}$ and $\nu\left( g^{+}\right) =\frac{1}{3}$, it is
possible to show that $V_{\frac{1}{5},\frac{1}{3}}=S_{\frac{1}{5},\frac{1}{3}%
}\setminus\left\{ \frac{4}{15}\right\} =\left\{ \tfrac{2}{9},\tfrac{1}{4},%
\tfrac{3}{11},\tfrac{2}{7},\tfrac{3}{10}\right\} $.
\end{example}

\section{Proof of Theorem \protect\ref{TeorPrincipal}\label{SecDem}}

Since each $f\in\mathcal{M}$ represents a map $\varphi:S^{1}\rightarrow
S^{1} $, we define an orbit of $f$ as a set of the form 
\begin{equation*}
\left\{ f^{k}\left( x_{0}\right) +m:\ k\in\mathbb{N}\ ,\ m\in \mathbb{Z}%
\right\} ,
\end{equation*}
where $x_{0}\in\mathbb{R}$. This orbit is periodic if there exist a $p\in%
\mathbb{Z}$ and a $q\in\mathbb{Z}^{+}$ such that $f^{q}\left( x_{0}\right)
=x_{0}+p$.

We know from \cite{Coutinho} that any $f\in \mathcal{D}$ has at least one
periodic orbit. Also that if $f_{0},f_{1}\in \mathcal{D}$ are such that $%
d_{H}(f_{0},f_{1})=0$ with $\nu \left( f_{0}\right) \neq \nu \left(
f_{1}\right) $, then any periodic orbit of $f_{0}$ intersects all periodic
orbits of $f_{1}$ (if this were not the case it would be possible to
construct $g$ by modifying only the discontinuities of $f_{0}$ in such a way
that $g$ maintains the periodic orbit of $f_{0}$ and also has one of $f_{1}$
in contradiction to the uniqueness of the rotation number).

In the proofs bellow, we will mainly use these facts and the following
trivial property:%
\begin{equation*}
\mathrm{if\ }d_{H}(f,g)=0\mathrm{\ and\ }x<y\mathrm{,\ then\ }f\left(
x\right) \leqslant g\left( y\right) .
\end{equation*}

\begin{proposition}
\label{Prop1}Suppose that $f,f_{1}\in\mathcal{D}$ are such that $%
d_{H}(f,f_{1})=0$ and write $\frac{p}{q}=\nu\left( f\right) $ and $\frac{%
p_{1}}{q_{1}}=\nu\left( f_{1}\right) $ as irreducible fractions. If $\frac {%
p+1}{q}=\frac{p_{1}}{q_{1}}$, then every periodic orbit of $f_{1}$ is
contained in a periodic orbit of $f$ and every periodic orbit of $f$
contains a periodic orbit of $f_{1}$.
\end{proposition}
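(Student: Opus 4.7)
The plan is to choose an intersection point of $O$ and $O_1$, bracket each $f_1$-iterate between two consecutive $f$-iterates using the $d_H(f,f_1)=0$ monotonicity, and then use the identity $p_1 q=q_1(p+1)$ to force every bracket to be saturated, pinning every $f_1$-iterate onto $O$.

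First I would set up labels. Write the $f$-periodic orbit $O$ as a strictly increasing sequence $\{z_i\}_{i\in\mathbb{Z}}\subset\mathbb{R}$ with $z_{i+q}=z_i+1$ and $f(z_i)=z_{i+p}$, and the $f_1$-periodic orbit $O_1$ as $\{w_j\}_{j\in\mathbb{Z}}$ with $w_{j+q_1}=w_j+1$ and $f_1(w_j)=w_{j+p_1}$. For $x\in\mathbb{R}$, let $\ell(x)$ denote the largest $i$ with $z_i\leqslant x$. The intersection property recalled just before the proposition gives some $z_i=w_j$; translating indices, I may assume $w_0=z_0$, so $\ell(w_0)=0$.

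Next the bracket. Because $d_H(f,f_1)=0$, one has $f^-=f_1^-$ and $f^+=f_1^+$, so for $x<y$ the monotonicity chain $f(x)\leqslant f^+(x)\leqslant f^-(y)\leqslant f_1(y)$ holds, with the roles of $f$ and $f_1$ interchangeable. Applied at $z_{i-1}<z_i<z_{i+1}$ this gives $f_1(z_i)\in[z_{i+p-1},z_{i+p+1}]$, and applied at $z_i<x<z_{i+1}$ it gives $f_1(x)\in[z_{i+p},z_{i+p+1}]$. Hence in both cases
\[
\ell(f_1(x))-\ell(x)\in\{p-1,p,p+1\},
\]
and since the bracket forces $f_1(x)\leqslant z_{\ell(x)+p+1}$, the maximal increment $p+1$ can occur only when $f_1(x)=z_{\ell(x)+p+1}\in O$.

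Now telescope. Set $\ell_k=\ell(w_{kp_1})$. From $f_1^{q_1}(w_0)=w_0+p_1$ and $w_0=z_0$ I get $w_{q_1 p_1}=z_0+p_1=z_{p_1 q}$, so $\ell_{q_1}=p_1 q$. The hypothesis $p_1/q_1=(p+1)/q$ rewrites as $p_1 q=q_1(p+1)$, so
\[
\sum_{k=0}^{q_1-1}(\ell_{k+1}-\ell_k)=\ell_{q_1}-\ell_0=q_1(p+1),
\]
which is the sum of $q_1$ terms each at most $p+1$. Equality forces every term to equal $p+1$, and by the previous paragraph this gives $f_1(w_{kp_1})=z_{\ell_k+p+1}\in O$ for every $k$. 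Since $w_0\in O$, induction yields $w_{kp_1}\in O$ for all $k$, and because the iterates $w_0,w_{p_1},\ldots,w_{(q_1-1)p_1}$ exhaust $O_1$ modulo $\mathbb{Z}$ (as $\gcd(p_1,q_1)=1$), this proves $O_1\subseteq O$. The second assertion follows symmetrically: given any $f$-orbit $O$, choose any $f_1$-orbit $O_1$, invoke the intersection property to align them, and apply the same argument.

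The step I expect to be delicate is the pinch at equality — that an integer label increment of exactly $p+1$ truly pins $f_1(x)$ onto the upper endpoint $z_{\ell(x)+p+1}$ of its bracket. Without this combinatorial rigidity, the telescoping identity would yield only an averaged statement, not the pointwise membership $w_{kp_1}\in O$ needed to close the argument.
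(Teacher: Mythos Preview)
Your argument is correct. The delicate pinch you flag is genuinely fine: once you know $f_1(x)\leqslant z_{\ell(x)+p+1}$ from $x<z_{\ell(x)+1}$, the equality $\ell(f_1(x))=\ell(x)+p+1$ literally means $z_{\ell(x)+p+1}\leqslant f_1(x)$, so the two inequalities pin $f_1(x)$ to the lattice point. After that the telescoping and the coprimality wrap-up are routine. (A tiny remark: once the telescoping forces every increment to equal $p+1$, the pinch already gives $w_{(k+1)p_1}\in O$ for each $k$ simultaneously, so the word ``induction'' there is not really needed.)

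Your route differs from the paper's. The paper proves the explicit formula $f_1^{k}(x_0)=x_{k(p+1)}$ by a two-sided squeeze: an ascending induction gives $f_1^{k}(x_0)\leqslant x_{k(p+1)}$, and a \emph{descending} induction from the anchor values $k\in qq_1\mathbb{N}$ (where the identity $p_1q=q_1(p+1)$ makes equality automatic) gives $x_{k(p+1)}\leqslant f_1^{k}(x_0)$. You replace this double induction by a single upper bound on the label increment plus a telescoping identity that forces saturation --- a pigeonhole in disguise. What your packaging buys is that you never have to run an induction backwards from an anchor; what the paper's packaging buys is the clean closed form $f_1^{k}(x_0)=x_{k(p+1)}$, which it reuses verbatim (as relation (\ref{RelPro1})) in the proof of Proposition~\ref{Prop3}. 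Both arguments rest on exactly the same two ingredients: the ``trivial property'' $d_H(f,g)=0,\ x<y\Rightarrow f(x)\leqslant g(y)$, and the arithmetic identity $p_1q=q_1(p+1)$.
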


\begin{proof}
Let $x_{0}$ be a point common to a periodic orbit of $f$ and $f_{1}$. So that%
\begin{equation*}
f^{q}\left( x_{0}\right) =x_{0}+p\quad \text{\textrm{and}}\quad
f_{1}^{q_{1}}\left( x_{0}\right) =x_{0}+p_{1}.
\end{equation*}%
Let $x_{j}$ be an increasing enumeration of the periodic orbit of $f$
passing through $x_{0}$, that is, 
\begin{equation*}
\left\{ x_{j}:j\in \mathbb{Z}\right\} =\left\{ f^{k}\left( x_{0}\right) +m:\
k\in \mathbb{N}\ ,\ m\in \mathbb{Z}\right\} ,
\end{equation*}%
with $x_{j}<x_{j+1}$ for all $j\in \mathbb{Z}$. Hence 
\begin{equation*}
x_{j+q}=x_{j}+1\quad \text{\textrm{and}}\quad f\left( x_{j}\right) =x_{j+p}.
\end{equation*}%
We will prove the proposition by showing that for every $k\in \mathbb{N}$ we
have%
\begin{equation}
f_{1}^{k}\left( x_{0}\right) =x_{k\left( p+1\right) }.  \label{RelPro1}
\end{equation}%
Let us first see that this relation (\ref{RelPro1}) is true when $k$ is a
multiple of $qq_{1}$. In fact (using $\frac{p+1}{q}=\frac{p_{1}}{q_{1}}$)%
\begin{equation*}
f_{1}^{nqq_{1}}\left( x_{0}\right) =x_{0}+nqp_{1}=x_{0}+nq_{1}\left(
p+1\right) =x_{nqq_{1}\left( p+1\right) }.
\end{equation*}%
On the other hand, if $x_{k\left( p+1\right) }\leqslant f_{1}^{k}\left(
x_{0}\right) $ is true for some $k\in \mathbb{Z}^{+}$, then 
\begin{equation*}
f\left( x_{\left( k-1\right) \left( p+1\right) }\right) =x_{k\left(
p+1\right) -1}<x_{k\left( p+1\right) }\leqslant f_{1}^{k}\left( x_{0}\right)
=f_{1}\circ f_{1}^{k-1}\left( x_{0}\right)
\end{equation*}%
and therefore we must have $x_{\left( k-1\right) \left( p+1\right)
}\leqslant f_{1}^{k-1}\left( x_{0}\right) $. Then we conclude by descending
induction that for every $k\in \mathbb{N}$ we have%
\begin{equation*}
x_{k\left( p+1\right) }\leqslant f_{1}^{k}\left( x_{0}\right) .
\end{equation*}%
Also, if $f_{1}^{k}\left( x_{0}\right) \leqslant x_{k\left( p+1\right) }$ is
true for some $k\in \mathbb{Z}^{+}$, then, since $x_{k\left( p+1\right)
}<x_{k\left( p+1\right) +1}$, we obtain 
\begin{equation*}
f_{1}^{k+1}\left( x_{0}\right) \leqslant f_{1}\left( x_{k\left( p+1\right)
}\right) \leqslant f\left( x_{k\left( p+1\right) +1}\right) =x_{k\left(
p+1\right) +1+p}=x_{\left( k+1\right) \left( p+1\right) },
\end{equation*}%
which proves by induction that the relation%
\begin{equation*}
f_{1}^{k}\left( x_{0}\right) \leqslant x_{k\left( p+1\right) }
\end{equation*}%
is also true for all $k\in \mathbb{N}$; so that the proposition is proved.
\end{proof}

\begin{proposition}
Suppose that $f_{0},f\in\mathcal{D}$ are such that $d_{H}(f_{0},f)=0$ and
write $\frac{p_{0}}{q_{0}}=\nu\left( f_{0}\right) $ and $\frac{p}{q}%
=\nu\left( f\right) $ as irreducible fractions. If $\frac{p_{0}}{q_{0}}=%
\frac{p-1}{q}$, then every periodic orbit of $f_{0}$ is contained in a
periodic orbit of $f$ and every periodic orbit of $f$ contains a periodic
orbit of $f_{0}$.
\end{proposition}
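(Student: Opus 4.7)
The plan is to mirror Proposition~\ref{Prop1}, with the direction of the inductions swapped because here $f_0$ is the function with the \emph{smaller} rotation number rather than the larger. Using the intersection property of periodic orbits recalled at the beginning of this section, pick a point $x_0$ lying on a periodic orbit of both $f_0$ and $f$, so that $f^q(x_0)=x_0+p$ and $f_0^{q_0}(x_0)=x_0+p_0$. Enumerate the $f$-orbit through $x_0$ increasingly as $\{x_j\}_{j\in\mathbb{Z}}$, which yields $x_{j+q}=x_j+1$ and $f(x_j)=x_{j+p}$. I will establish the key identity
\[
f_0^k(x_0)=x_{k(p-1)}\quad\text{for every }k\in\mathbb{N}.
\]
Once this is proved, the $f_0$-orbit of $x_0$ is the subset $\{x_{k(p-1)}:k\in\mathbb{Z}\}$ of the $f$-orbit, giving both inclusions stated in the proposition (the containment of an arbitrary periodic $f$-orbit by a periodic $f_0$-orbit follows by choosing $x_0$ in that given orbit, again via the intersection property).

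The starting equality at every multiple $k=nqq_0$ is a direct rotation computation: from $p_0/q_0=(p-1)/q$ we get $p_0 q=q_0(p-1)$, so
\[
f_0^{nqq_0}(x_0)=x_0+nqp_0=x_0+nq_0(p-1)=x_{nqq_0(p-1)}.
\]
The remaining values of $k$ are handled by two inductions exploiting the pseudometric consequences $x<y\Rightarrow f_0(x)\leqslant f(y)$ and $x<y\Rightarrow f(x)\leqslant f_0(y)$, together with the monotonicity of $f_0$. For the \emph{lower} bound $f_0^k(x_0)\geqslant x_{k(p-1)}$, I would use ordinary ascending induction on $k$: the step from $k$ to $k+1$ chains monotonicity with the key inequality
\[
f_0^{k+1}(x_0)\geqslant f_0\bigl(x_{k(p-1)}\bigr)\geqslant f\bigl(x_{k(p-1)-1}\bigr)=x_{(k+1)(p-1)},
\]
where the middle step uses $x_{k(p-1)-1}<x_{k(p-1)}$ and the pseudometric property. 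For the \emph{upper} bound $f_0^k(x_0)\leqslant x_{k(p-1)}$, I would do descending induction from the equality at $k=nqq_0$: if $f_0^{k-1}(x_0)>x_{(k-1)(p-1)}$ then
\[
f_0^k(x_0)\geqslant f\bigl(x_{(k-1)(p-1)}\bigr)=x_{k(p-1)+1}>x_{k(p-1)},
\]
contradicting the inductive hypothesis at $k$.

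I do not anticipate a real technical obstacle beyond bookkeeping; the only conceptual point that must be kept straight is that decreasing the rotation number by $1/q$ means each application of $f_0$ advances an $f$-orbit point $x_j$ by $p-1$ indices rather than $p$, which is precisely why the role of the two pseudometric inequalities (and hence which induction produces the upper and which the lower bound) is exchanged compared with Proposition~\ref{Prop1}. Modulo this swap, the proof proceeds by the same pattern: an ascending induction from $k=0$ gives one direction for all $k\in\mathbb{N}$, and a descending induction from the equality at $k=nqq_0$ (for arbitrarily large $n$) gives the other; together they yield the identity $f_0^k(x_0)=x_{k(p-1)}$ from which the proposition follows.
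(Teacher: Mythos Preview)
Your proposal is correct and follows exactly the paper's approach: the paper's proof is a terse two-line sketch saying to mirror Proposition~\ref{Prop1} and prove $f_0^k(x_0)=x_{k(p-1)}$ by establishing the upper bound via descending induction and the lower bound via ascending induction, which is precisely what you spell out. Your detailed inductive steps are the right ones, and your observation that the directions of the two inductions are swapped relative to Proposition~\ref{Prop1} is exactly the point.
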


\begin{proof}
Similarly to the proof of Proposition \ref{Prop1}, using the same notation
for $x_{j}$, where now $x_{0}$ is a point common to a periodic orbit of $f$
and $f_{0}$, we prove successively that $f_{0}^{nqq_{0}}\left( x_{0}\right)
=x_{nqq_{0}\left( p-1\right) }$, that $f_{0}^{k}\left( x_{0}\right)
\leqslant x_{k\left( p-1\right) }$ by descending induction and that $%
x_{k\left( p-1\right) }\leqslant f_{0}^{k}\left( x_{0}\right) $ by usual
induction. We obtain 
\begin{equation}
f_{0}^{k}\left( x_{0}\right) =x_{k\left( p-1\right) }  \label{RelPro2}
\end{equation}
for all $k\in\mathbb{N}$, which was what we wanted to prove.
\end{proof}

\begin{proposition}
\label{Prop3}Suppose that $f\in\mathcal{D}$, $\frac{p}{q}=\nu\left( f\right) 
$ is an irreducible fraction, $\frac{p-1}{q}=\nu\left( f^{-}\right) $ and $%
\frac{p+1}{q}=\nu\left( f^{+}\right) $. Then $p$ is odd.
\end{proposition}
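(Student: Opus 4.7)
The plan is to argue by contradiction: assume $p$ is even and extract from Proposition \ref{Prop1} and its $f^-$-analogue two forced values of the left limit $f^-$ at a suitably chosen point that disagree. The punch line will be a short Chinese Remainder Theorem argument, made possible precisely by the parity assumption.

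I will begin by fixing a point $x_0$ lying in a periodic orbit of $f$ that is also common to a periodic orbit of $f^+$; such an $x_0$ exists because $f,f^+\in\mathcal{D}$, $d_H(f,f^+)=0$ and $\nu(f)\neq\nu(f^+)$, so by the intersection property recalled at the start of this section every periodic orbit of $f$ meets every periodic orbit of $f^+$. Let $\{x_j\}_{j\in\mathbb{Z}}$ be the increasing enumeration of this $f$-orbit, so that $x_{j+q}=x_j+1$ and $f(x_j)=x_{j+p}$. Proposition \ref{Prop1} then identifies the $f^+$-orbit through $x_0$ with $\{x_j:d\mid j\}$, where $d:=\gcd(p+1,q)$, and gives $f^+(x_j)=x_{j+p+1}$ whenever $d\mid j$. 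Analogously, picking any $f$-orbit point $x_b$ also lying in a periodic orbit of $f^-$ (which exists because every periodic orbit of $f$ contains one of $f^-$), the preceding proposition identifies that $f^-$-orbit with $\{x_j:j\equiv b\pmod{d'}\}$, where $d':=\gcd(p-1,q)$, and gives $f^-(x_j)=x_{j+p-1}$ whenever $j\equiv b\pmod{d'}$.

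Parity now enters through $\gcd(p-1,p+1)=\gcd(p-1,2)$, which equals $1$ precisely when $p$ is even. Under that assumption $\gcd(d,d')\mid\gcd(p-1,p+1,q)=1$, so $\gcd(d,d')=1$, and the Chinese Remainder Theorem supplies $j\in\mathbb{Z}$ with $j\equiv b\pmod{d'}$ and $j\equiv 1\pmod{d}$. For this $j$, the preceding proposition yields $f^-(x_j)=x_{j+p-1}$, while Proposition \ref{Prop1} applied at $x_{j-1}$ yields $f^+(x_{j-1})=x_{j+p}$. Combining with the trivial sandwich inequality $f^+(x_{j-1})\leqslant f^-(x_j)$, which follows from $x_{j-1}<x_j$ and $d_H(f^+,f^-)=0$, one obtains $x_{j+p}\leqslant x_{j+p-1}$, contradicting the strict ordering $x_{j+p-1}<x_{j+p}$. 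Hence $p$ must be odd.

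The delicate point I anticipate is the bookkeeping around the offset $b$: since $x_0$ is only forced to lie in both the $f$- and $f^+$-orbits, the $f^-$-orbit sitting inside the same $f$-orbit can be anchored at an arbitrary residue $b\bmod d'$, not at $0$. What rescues the argument is precisely $\gcd(d,d')=1$, which is the combinatorial manifestation of ``$p$ is even'' in this setup and which makes the congruence system solvable for every value of $b$; this coprimality is the only place where the parity hypothesis on $p$ is actually used.
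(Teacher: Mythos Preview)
Your argument is correct. The core mechanism is the same as the paper's: use that $p$ even makes $p-1$ and $p+1$ coprime, locate two adjacent points $x_{i}<x_{i+1}$ of the $f$-orbit at which $f^{+}$ and $f^{-}$ respectively act along that orbit, and derive from the sandwich inequality $f^{+}(x_{i})\leqslant f^{-}(x_{i+1})$ the impossible relation $x_{i+1+p}\leqslant x_{i+p}$.

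The one substantive difference is in the choice of base point. You anchor $x_{0}$ only in an $f^{+}$-orbit and then separately locate some $x_{b}$ in an $f^{-}$-orbit inside the same $f$-orbit; this forces you to carry the residue $b$ and invoke the Chinese Remainder Theorem on $d=\gcd(p+1,q)$ and $d'=\gcd(p-1,q)$. The paper instead applies the intersection property directly to the pair $(f^{-},f^{+})$ and picks $x_{0}$ common to a periodic orbit of \emph{both} $f^{-}$ and $f^{+}$; by the two preceding propositions this $x_{0}$ then lies in a single periodic $f$-orbit, and one gets simultaneously $(f^{-})^{k}(x_{0})=x_{k(p-1)}$ and $(f^{+})^{k}(x_{0})=x_{k(p+1)}$ with no offset. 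A plain B\'ezout identity $k_{0}(p-1)-k_{1}(p+1)=1$ with $k_{0},k_{1}\in\mathbb{N}$ (available because $p-1,p+1$ are coprime and of the same sign once one checks $p\neq 0$) then produces the adjacent pair directly. So the paper's route is shorter and avoids the $b$-bookkeeping, at the small cost of having to argue $p\neq 0$ separately; your CRT version handles $p=0$ automatically and is otherwise a faithful variant of the same idea.
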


\begin{proof}
Let $x_{0}$ be a point common to a periodic orbit of $f^{-}$ and $f^{+}$. By
the previous propositions, $x_{0}$ belongs to a periodic orbit of $f$ which,
as before, we denote by $\left\{ x_{j}:j\in\mathbb{Z}\right\} $ with $%
x_{j}<x_{j+1}$. Using the relations (\ref{RelPro1}) and (\ref{RelPro2}) we
have for every $k\in\mathbb{N}$ 
\begin{equation*}
\left. f^{-}\right. ^{k}\left( x_{0}\right) =x_{k\left( p-1\right) }\quad%
\text{\textrm{and}}\quad\left. f^{+}\right. ^{k}\left( x_{0}\right)
=x_{k\left( p+1\right) }.
\end{equation*}

Let us first note that $p\neq0$; in fact Theorem \ref{TeorNumRotDesc}
applied to $f^{-}$ and $f^{+}$ shows in particular that $\nu\left(
f^{-}\right) \nu\left( f^{+}\right) \geqslant0$ and then $\nu\left(
f^{-}\right) <\frac{p}{q}<\nu\left( f^{+}\right) $ implies $p\neq0$.

If $p$ were even, then $p-1$ and $p+1$ would be coprime of the same sign, so
that there would exist $k_{0}$ and $k_{1}$ in $\mathbb{N}$ such that 
\begin{equation*}
k_{0}\left( p-1\right) -k_{1}\left( p+1\right) =1.
\end{equation*}%
Hence $x_{k_{1}\left( p+1\right) }<x_{k_{0}\left( p-1\right) }$ and therefore%
\begin{equation*}
x_{\left( k_{1}+1\right) \left( p+1\right) }=f^{+}\left( x_{k_{1}\left(
p+1\right) }\right) \leqslant f^{-}\left( x_{k_{0}\left( p-1\right) }\right)
=x_{\left( k_{0}+1\right) \left( p-1\right) },
\end{equation*}%
in contradiction to 
\begin{equation*}
\left( k_{0}+1\right) \left( p-1\right) -\left( k_{1}+1\right) \left(
p+1\right) =-1<0,
\end{equation*}%
which implies $x_{\left( k_{0}+1\right) \left( p-1\right) }<x_{\left(
k_{1}+1\right) \left( p+1\right) }$. Hence $p$ must be odd.
\end{proof}

We can now easily prove Theorem \ref{TeorPrincipal} by applying Proposition %
\ref{Prop3} to $f$ and $f+1$, which have rotation numbers $\nu \left(
f\right) =\frac{p}{q}$ and $\nu \left( f+1\right) =\frac{p+q}{q}$. We find
that $p$ and $p+q$ are odd, and therefore $q$ is even.

\end{document}